\newtheorem{theorem}{Theorem}
\theoremstyle{plain}
\newtheorem{conjecture}[theorem]{Conjecture}
\newtheorem{corollary}[theorem]{Corollary}
\newtheorem{fact}[theorem]{Fact}
\newtheorem{proposition}[theorem]{Proposition}
\numberwithin{equation}{section}
\numberwithin{theorem}{section}
\numberwithin{case}{section}
\numberwithin{subcase}{case}
\begin{document}
\title[Forbidding Hamilton cycles in hypergraphs]{Forbidding Hamilton cycles in uniform hypergraphs}
\thanks{
The first author is supported by FAPESP (Proc. 2014/18641-5).
The second author is partially supported by NSF grant DMS-1400073.}
\author{Jie Han}
\address{Instituto de Matem\'{a}tica e Estat\'{\i}stica, Universidade de S\~{a}o Paulo, Rua do Mat\~{a}o 1010, 05508-090, S\~{a}o Paulo, Brazil}
\email[Jie Han]{jhan@ime.usp.br}
\author{Yi Zhao}
\address
{Department of Mathematics and Statistics, Georgia State University, Atlanta, GA 30303, USA}
\email[Yi Zhao]{yzhao6@gsu.edu}

\date{\today}
\subjclass{Primary 05C45, 05C65}%
\keywords{Hamilton cycles, hypergraphs}%

\begin{abstract}
For $1\le d\le \ell< k$, we give a new lower bound for the minimum $d$-degree threshold that guarantees a Hamilton $\ell$-cycle in $k$-uniform hypergraphs. When $k\ge 4$ and $d< \ell=k-1$, this bound is larger than the conjectured minimum $d$-degree threshold for perfect matchings and thus disproves a well-known conjecture of R\"odl and Ruci\'nski. Our (simple) construction generalizes a construction of Katona and Kierstead and the space barrier for Hamilton cycles.
\end{abstract}

\maketitle

\section{Introduction}

The study of Hamilton cycles is an important topic in graph theory. A classical result of Dirac \cite{Dirac} states that every graph on $n\ge 3$ vertices with minimum degree $n/2$ contains a Hamilton cycle. In recent years, researchers have worked on extending this theorem to hypergraphs -- see recent surveys \cite{KuOs14ICM, RR, Zhao15}.

To define Hamilton cycles in hypergraphs, we need the following definitions. Given $k\ge 2$, a $k$-uniform hypergraph (in short, \emph{$k$-graph}) consists of a vertex set $V$ and an edge set $E\subseteq \binom{V}{k}$, where every edge is a $k$-element subset of $V$.
Given a $k$-graph $H$ with a set $S$ of $d$ vertices (where $1 \le d \le k-1$) we define $\deg_{H} (S)$ to be the number of edges containing $S$ (the subscript $H$ is omitted if it is clear from the context). The \emph{minimum $d$-degree $\delta _{d} (H)$} of $H$ is the minimum of $\deg_{H} (S)$ over all $d$-vertex sets $S$ in $H$.
For $1\le \ell \le k-1$, a $k$-graph is a called an \emph{$\ell$-cycle} if its vertices can be ordered cyclically such that each of its edges consists of $k$ consecutive vertices and every two consecutive edges (in the natural order of the edges) share exactly $\ell$ vertices. In $k$-graphs, a $(k-1)$-cycle is often called a \emph{tight} cycle. We say that a $k$-graph contains a \emph{Hamilton $\ell$-cycle} if it contains an $\ell$-cycle as a spanning subhypergraph. Note that a Hamilton $\ell$-cycle of a $k$-graph on $n$ vertices contains exactly $n/(k - \ell)$ edges, implying that $k- \ell$ divides $n$.

Let $1\le d, \ell \le k-1$. For $n\in (k - \ell)\mathbb{N}$, we define $h_d^\ell(k,n)$ to be the smallest integer $h$ such that every $n$-vertex $k$-graph $H$ satisfying $\delta_d(H)\ge h$ contains a Hamilton $\ell$-cycle.
Note that whenever we write $h_d^{\ell}(k,n)$, we always assume that $1\le d\le k-1$.
Moreover, we often write $h_d(k,n)$ instead of $h_d^{k-1}(k,n)$ for simplicity.
Similarly, for $n\in k\mathbb{N}$, we define {${m_d(k,n)}$} to be the smallest integer $m$ such that every $n$-vertex $k$-graph $H$ satisfying $\delta_d(H)\ge m$ contains a perfect matching.
The problem of determining $m_d(k,n)$ has attracted much attention recently and the asymptotic value of $m_d(k,n)$ is conjectured as follows. Note that the $o(1)$ term refers to a function that tends to $0$ as $n\to \infty$ throughout the paper.

\begin{conjecture}\cite{HPS, KuOs-survey}
\label{conj:mat}
For $1\le d\le k-1$ and $k \mid n$,
\[
m_d (k,n) = \left(\max \left\{ \frac12, 1- \left(1- \frac{1}{k} \right)^{k-d}  \right\} +o(1)\right)\binom{n-d}{k-d}.
\]
\end{conjecture}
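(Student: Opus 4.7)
\noindent\emph{Proof proposal.}
The statement has two parts: a lower bound on $m_d(k,n)$ from explicit constructions and a matching (asymptotic) upper bound. For the lower bound, my plan is to exhibit two extremal $k$-graphs with no perfect matching. The first is the \emph{space barrier}: partition $V$ into $A\cup B$ with $|A|=\lceil n/k\rceil-1$, and let the edges be all $k$-sets meeting $A$. A $d$-set $S$ disjoint from $A$ has degree $\binom{n-d}{k-d}-\binom{n-|A|-d}{k-d}$, which is asymptotically $\bigl(1-(1-1/k)^{k-d}\bigr)\binom{n-d}{k-d}$, and no perfect matching exists because every edge meets $A$ while $|A|<n/k$. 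The second is the \emph{divisibility barrier}: partition $V=A\cup B$ with $|A|$ chosen so that every $k$-set $e$ with $|e\cap A|$ in a prescribed parity class fails, in aggregate, to cover $V$. Tuning $|A|$ realizes minimum $d$-degree $\bigl(\tfrac12+o(1)\bigr)\binom{n-d}{k-d}$. Taking the larger of the two lower bounds gives the stated $\max$.

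For the upper bound I would attempt the \emph{absorbing method}. First, a probabilistic construction should yield a small \emph{absorbing matching} $M_0$ with the property that any sufficiently small admissible vertex subset $T$ can be swallowed into $V(M_0)\cup T$ by a perfect matching. Next, set aside a random \emph{reservoir} $R$ for clean-up; on $V\setminus V(M_0)$ one then seeks a matching covering all but $o(n)$ vertices. This near-perfect matching step reduces to producing an almost-perfect \emph{fractional} matching from the degree hypothesis via LP duality, which can be rounded to an integer matching using the Pippenger--Spencer nibble. Finally the leftover, together with $R$, is absorbed by $M_0$ to complete a perfect matching of $V$.

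The main obstacle, and the reason the conjecture remains open for most pairs $(d,k)$, is the fractional step: it is precisely here that both expressions in the $\max$ must emerge, because the fractional extremizers mimic the two barrier constructions above. The standard route is a stability dichotomy: either $H$ is far from both barriers and a robust fractional matching exists and rounds cleanly, or $H$ is close to one of them and a structure-specific argument completes the matching. Carrying out this dichotomy uniformly across all $1\le d\le k-1$ is what has so far resisted proof; the conjecture is known for $d=k-1$ (R\"odl--Ruci\'nski--Szemer\'edi) and in several cases with small $k-d$, but a general solution requires a stability analysis that simultaneously handles both extremal regimes.
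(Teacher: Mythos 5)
This statement is an open \emph{conjecture}, cited in the paper from \cite{HPS, KuOs-survey}; the paper gives no proof of it, and indeed the whole point of the paper is to \emph{disprove} the analogous conjecture for Hamilton cycles. So there is no paper proof to compare against. Your proposal is, correctly, not a proof either: you sketch the two standard extremal constructions (space barrier and divisibility/parity barrier) for the lower bound, and you outline the absorbing-method template for the upper bound, but you then candidly identify that the crucial fractional-matching/stability step has not been carried out uniformly across all $1\le d\le k-1$ and that this is precisely why the conjecture remains open. That self-assessment is accurate.

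To the extent your sketch can be evaluated, the lower-bound half is essentially right and matches the folklore constructions, though the divisibility-barrier description is too vague to yield the $\tfrac12$ bound on its own (one must specify the parity class and verify the counting). The upper-bound half names the right tools (absorbing matching, reservoir, fractional matching plus nibble, stability dichotomy) but, as you say, does not supply the key lemma that would make them close. So the proposal is a reasonable survey of the state of the art and of why the problem is hard, but it is not a proof, and it should not be presented as one; the honest conclusion is exactly the one you reach in your final paragraph. Note also that the paper's Theorem~\ref{prop:cons} shows $h_d(k,n)$ can exceed the right-hand side here for $k\ge 4$, which is why Conjecture~\ref{conj:1} (relating $h_d$ to $m_d$) fails even if Conjecture~\ref{conj:mat} is true.
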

Conjecture~\ref{conj:mat} has been confirmed  \cite{AFHRRS, Pik} for $\min\{k-4, k/2\}\le d\le k-1$ (the exact values of $m_d(k, n)$ are also known in some cases, e.g., \cite{RRS09, TrZh13}). On the other hand, 
$h_d^\ell(k,n)$ has also been extensively studied \cite{BHS, CzMo, GPW, HS, HZk2, HZ2, HZ1, KK, KKMO, KMO, KO, RoRu14, RRS06, RRS08, RRS11}.
In particular, R\"odl, Ruci\'nski and Szemer\'edi \cite{RRS06, RRS08} showed that $h_{k-1}(k,n) = (1/2+o(1))n$.
The same authors proved in \cite{RRS06mat} that $m_{k-1}(k,n) = (1/2+o(1))n$ (later they determined $m_{k-1}(k,n)$ exactly \cite{RRS09}).
This suggests that the values of $h_d(k,n)$ and $m_d(k,n)$ are closely related and inspires R\"{o}dl and Ruci\'nski to make the following conjecture.


\begin{conjecture}\cite[Conjecture 2.18]{RR}\label{conj:1}
Let $k\ge 3$ and $1\le d\le k-2$. Then
\[
h_d(k,n) = m_d(k,n) + o(n^{k-d}).
\]
\end{conjecture}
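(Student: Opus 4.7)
The abstract signals that Conjecture~\ref{conj:1} is in fact \emph{false} for $k\ge 4$ and $d\le k-2$, so the real task here is to \emph{disprove} it: for some such $k,d$, exhibit a $k$-graph $H$ on $n$ vertices whose minimum $d$-degree asymptotically exceeds $m_d(k,n)$ yet which contains no Hamilton $(k-1)$-cycle. My plan is to build $H$ as a simple space-barrier construction of Katona--Kierstead type.

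Concretely, I would partition $V(H)=A\cup B$ with $|A|=\alpha n$ for a parameter $\alpha$ to be optimized, and declare $e\in \binom{V}{k}$ to be an edge precisely when $|e\cap A|$ avoids some forbidden set of values. The absence of a tight Hamilton cycle would follow from the observation that consecutive edges $e_i, e_{i+1}$ of a tight cycle differ in a single vertex, so $|e_i\cap A|$ changes by at most $1$ as $i$ advances. Combined with the global identity $\sum_i |e_i\cap A|=k|A|$, which pins the \emph{average} intersection size at $k\alpha$, a discrete intermediate-value argument would force every intersection size in some interval to appear along the cycle, contradicting the edge rule. Computing $\delta_d(H)$ is then a routine binomial count stratified by how many of the $d$ vertices lie in $A$.

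As a warm-up, taking $k=4$, $d=1$, $\alpha=1/2$, and forbidden intersection size $\{2\}$, a direct count gives $\delta_1(H)\sim (5/8)\binom{n-1}{3}$, while Conjecture~\ref{conj:mat} predicts $m_1(4,n)\sim (37/64)\binom{n-1}{3}$, and $5/8 = 40/64 > 37/64$, so the construction strictly beats the matching threshold. The main obstacle I foresee is extending this \emph{uniformly} over all $k\ge 4$ and $d\le k-2$: one must optimize $\alpha$ together with the forbidden pattern so that the resulting degree density strictly exceeds $\max\{1/2,\, 1-(1-1/k)^{k-d}\}$ in every regime, which requires a separate analysis depending on whether the $1/2$ branch or the $1-(1-1/k)^{k-d}$ branch governs $m_d(k,n)$. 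A secondary technical issue is making the intermediate-value step on the sequence $|e_i \cap A|$ fully rigorous when the forbidden set is richer than a single value, since mere averaging is not enough -- one genuinely needs the step-by-one property of tight cycles, not just the global average.
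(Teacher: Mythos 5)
Your proposal correctly identifies that the conjecture should be \emph{disproved}, and it takes essentially the same route as the paper: the partition construction with a forbidden intersection size, the step-by-one/averaging argument ruling out a tight Hamilton cycle, and the comparison of $\delta_d(H)$ against $m_d(k,n)$; your warm-up with $k=4$, $d=1$, $\alpha=1/2$, forbidden value $\{2\}$ is exactly the paper's construction for $t=3$ (the paper's choice $\alpha=\lceil t/2\rceil/(t+1)$ reduces to $1/2$ for odd $t$), and by itself it already disproves the conjecture since $m_1(4,n)$ is known. The optimization you foresee as the ``main obstacle'' is carried out in the paper by taking $\alpha=\lceil t/2\rceil/(t+1)$, pinning down $j$ via a small lemma, and then showing $f(t)<1/\sqrt{3t/2+1}<\max\{1/2,\,1-(1-1/k)^{t}\}$ for all relevant $k,t$; the ``intermediate value'' concern you raise is handled by the observation that consecutive edges of an $\ell$-cycle have intersection sizes differing by at most $k-\ell$, which exactly matches the width of the forbidden interval.
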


By using the value of $m_d(k, n)$ from Conjecture~\ref{conj:mat}, K\"uhn and Osthus stated this conjecture explicitly for the case $d=1$.  
\begin{conjecture}\cite[Conjecture 5.3]{KuOs14ICM}\label{conj:2}
Let $k\ge 3$. Then
\[
h_1(k,n) = \left( 1- \left(1- \frac{1}{k} \right)^{k-1} + o(1)\right)\binom{n-1}{k-1}.
\]
\end{conjecture}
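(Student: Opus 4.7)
The plan would be two-sided: first verify the lower bound via a known construction, then try to match it from above by the absorbing method.

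For the lower bound, the natural candidate is the \emph{space barrier}. Partition $V=A\cup B$ with $|A|=\lfloor(k-1)n/k\rfloor+1$, and let $H$ consist of all $k$-subsets meeting $B$. Each vertex of $A$ has degree $\binom{n-1}{k-1}-\binom{|A|-1}{k-1}$, which equals $(1-((k-1)/k)^{k-1}+o(1))\binom{n-1}{k-1}$, and vertices of $B$ have larger degree. In any Hamilton tight cycle every $k$ consecutive vertices form an edge and hence must meet $B$, so no $k$ consecutive vertices of the cyclic order can lie in $A$; a density argument then forces $|A|\le (k-1)n/k$, a contradiction. This matches the conjectured threshold exactly.

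For the upper bound, I would follow the R\"odl--Ruci\'nski--Szemer\'edi absorbing scheme. The first step is to build a short \emph{absorbing tight path} $P_A$ such that, for every small $L\subset V\setminus V(P_A)$, the vertex set $V(P_A)\cup L$ spans a tight path in $H$. The second step is to reserve a small \emph{reservoir} $R$ that supplies connecting subpaths between prescribed endpoint pairs. The third step is, on $V\setminus(V(P_A)\cup R)$, to apply a weak regularity lemma together with a fractional matching argument to cover all but at most $\beta n$ vertices by a bounded number of long tight paths. Finally, one uses $R$ to concatenate these paths into a tight cycle and absorbs the leftover via $P_A$.

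The main obstacle I anticipate is the absorbing lemma itself. Under only a $1$-degree hypothesis, pairs and larger tuples of vertices carry essentially no guaranteed local structure, so the usual per-vertex counting of absorber configurations is delicate. Equally worrying is that the conjectured threshold coincides with the (conjectured) perfect matching threshold $m_1(k,n)$ of Conjecture~\ref{conj:mat}; this coincidence deserves serious scrutiny, because any construction combining the space barrier with a Katona--Kierstead-type parity or balancedness obstruction might yield a $k$-graph with strictly larger minimum $1$-degree and no Hamilton tight cycle, which would refute the conjecture outright. Accordingly, before committing to the absorption machinery I would stress-test candidate extremal constructions -- in particular combinations of space and divisibility barriers -- to see whether the space barrier is truly tight.
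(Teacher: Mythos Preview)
Your proposal treats this as a theorem to be proved, but the paper \emph{disproves} Conjecture~\ref{conj:2} for every $k\ge 4$. The lower bound via the space barrier is fine, but the absorbing-method upper bound cannot succeed, because there exist $k$-graphs with minimum $1$-degree strictly above $\bigl(1-(1-1/k)^{k-1}+o(1)\bigr)\binom{n-1}{k-1}$ and no tight Hamilton cycle.

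Your closing instinct is exactly right, and is precisely what the paper carries out. The refuting construction (Proposition~\ref{clm:NO}) simultaneously generalises the space barrier and the Katona--Kierstead example: partition $V=X\dot\cup Y$ and take as edges all $k$-sets $e$ with $|e\cap X|\ne j$ for a suitable integer $j$. If $\frac{j-1}{k}n<|X|<\frac{j+1}{k}n$, then a tight Hamilton cycle is impossible: consecutive edges share $k-1$ vertices, so their $X$-counts differ by at most $1$, forcing all edges of the cycle to have $X$-count on one side of $j$; averaging over the cycle then contradicts the assumed size of $X$. Taking $t=k-1$, $|X|=\frac{\lceil t/2\rceil}{t+1}\,n$, and the matching $j$ yields (Theorem~\ref{prop:cons})
\[
\delta_1(H)=\Bigl(1-\binom{t}{\lfloor t/2\rfloor}\frac{\lceil t/2\rceil^{\lceil t/2\rceil}(\lfloor t/2\rfloor+1)^{\lfloor t/2\rfloor}}{(t+1)^{t}}+o(1)\Bigr)\binom{n}{t},
\]
and Corollary~\ref{cor:disprove} checks that the bracketed quantity exceeds $1-(1-1/k)^{k-1}$ for all $k\ge 4$. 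So the ``stress-test'' you flag in your final paragraph is the entire content here; the absorption programme should be dropped, not pursued.
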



In this note we provide new lower bounds for $h_d^{\ell}(k,n)$ when $d\le \ell$.

\begin{theorem}\label{prop:cons}
Let $1\le d \le k-1$ and $t=k-d$, then
\[
h_d(k,n) \ge  \Bigg( 1 - \binom t{ \lfloor t/2 \rfloor} \frac{ \lceil t/2\rceil^{\lceil t/2\rceil} (\lfloor t/2 \rfloor+1) ^{\lfloor t/2 \rfloor}}{(t+1)^t} +o(1) \Bigg) \binom nt.
\]
\end{theorem}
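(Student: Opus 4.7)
Let $a := \lceil t/2 \rceil$ and $b := \lfloor t/2 \rfloor + 1$ (so $a + b = t + 1$), and abbreviate $p := \binom{t}{a}\, a^a\, b^{b-1}/(t+1)^t$ for the quantity subtracted from $1$ in the bound. My plan is to exhibit, for every large $n$, an $n$-vertex $k$-graph $H$ with $\delta_d(H) \ge (1 - p - o(1))\binom{n-d}{t}$ that contains no Hamilton $(k-1)$-cycle. I would take $H$ to be the following common generalization of the Katona--Kierstead two-part obstruction and the classical space-barrier construction: partition $V := V(H) = A \cup B$ with $|A|$ the nearest integer to $an/(t+1)$, fix an integer $c$ satisfying $c - 1 < ak/(t+1) < c + 1$ (since $ak/(t+1) = a + a(d-1)/(t+1)$ lies in $[a, a+d]$, the choice $c := \lceil ak/(t+1) \rceil$ lies in $[a, a+d]$), and put $E(H) := \{e \in \binom{V}{k} : |e \cap A| \ne c\}$.

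For the degree estimate, I would fix a $d$-set $S \subset V$ with $s := |S \cap A|$ and count bad $k$-sets $e \supseteq S$ satisfying $|e \cap A| = c$. This count equals $\binom{|A| - s}{c - s}\binom{|B| - (d - s)}{t - c + s}$, and a short asymptotic calculation yields
\begin{equation*}
\frac{\#\{e \supseteq S : |e \cap A| = c\}}{\binom{n-d}{t}} = \binom{t}{c-s}\,\frac{a^{c-s}\, b^{t-c+s}}{(t+1)^t} + o(1),
\end{equation*}
which is the $\mathrm{Bin}(t, a/(t+1))$ point mass at $u := c - s$. Because this mass function is unimodal with mode $a$, and because the inclusion $c \in [a, a+d]$ forces $s = c - a \in [0, d]$ to be attainable by some $d$-set, the maximum of this ratio over all $d$-sets is $p + o(1)$, achieved at $s = c - a$. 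Hence $\delta_d(H) \ge (1 - p - o(1))\binom{n-d}{t}$.

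To rule out Hamilton tight cycles, I would argue by contradiction: suppose $C \subseteq H$ is a Hamilton $(k-1)$-cycle with cyclic vertex order $v_1, \ldots, v_n$, set $X_i := \mathbf{1}[v_i \in A]$, and define the sliding-window sums $Y_i := \sum_{j=0}^{k-1} X_{i+j}$ (indices mod $n$), so that $Y_i = |e_i \cap A|$ for the $i$-th tight-cycle edge $e_i := \{v_i, v_{i+1}, \ldots, v_{i+k-1}\}$. Each $e_i \in H$ forces $Y_i \ne c$. The identity $Y_{i+1} - Y_i = X_{i+k} - X_i \in \{-1, 0, 1\}$ shows that consecutive $Y_i$ differ by at most one, so the cyclic integer sequence $(Y_i)$ cannot skip the value $c$; hence either $Y_i \le c - 1$ for all $i$ or $Y_i \ge c + 1$ for all $i$. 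But double-counting vertex--edge incidences gives $\sum_i Y_i = k|A|$, whence $\tfrac{1}{n}\sum_i Y_i = ak/(t+1) + o(1)$, which lies strictly between $c - 1$ and $c + 1$ by the choice of $c$. Both cases then contradict this.

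The delicate point, and main obstacle, is the simultaneous calibration of the part-size ratio $|A|/n \to a/(t+1)$ and the forbidden value $c$: the ratio must make the binomial mode $u = a$ attainable as $s = c - a$ by some $d$-set (yielding the sharp fraction $p$), while $ak/(t+1)$ must fall in the open interval $(c-1, c+1)$ (so the sliding-window argument actually forces some $Y_i = c$). The specific values $a = \lceil t/2 \rceil$ and $b = \lfloor t/2 \rfloor + 1$ are precisely what reconciles both demands uniformly across $1 \le d \le k - 1$; divisibility and rounding issues (e.g.\ when $(t+1) \nmid n$ or $ak/(t+1) \in \mathbb{Z}$) are routine and get absorbed into the $o(1)$ error.
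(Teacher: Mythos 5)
Your proof is correct and takes essentially the same approach as the paper's: you use the same two-part construction with $|A| \approx \lceil t/2\rceil\, n/(t+1)$, forbid intersection size $c = \lceil ak/(t+1)\rceil$ (a specific choice of the paper's $j$), obtain the degree bound from the same binomial-mode computation, and rule out Hamilton tight cycles via the sliding-window argument that the paper packages as Proposition~\ref{clm:NO} (stated there for general $\ell$-cycles for reuse in Theorem~\ref{prop:cons2}).
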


\begin{theorem}\label{prop:cons2}
Let $1\le d \le \ell \le k-1$ and $t=k-d$. Then
\[
h_d^{\ell}(k,n) \ge \left( 1 - b_{t, k-\ell} {2^{-t}} + o(1) \right)\binom nt ,
\]
where $b_{t, k-\ell}$ equals the largest sum of the $k-\ell$ consecutive binomial coefficients from $\binom{t}{0},\dots, \binom tt$.
\end{theorem}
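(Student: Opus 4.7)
Setting $q := k - \ell$ and $t := k - d$, I aim to exhibit a $k$-graph $H$ on $n$ vertices with no Hamilton $\ell$-cycle and $\delta_d(H) \ge (1 - b_{t,q} 2^{-t} - o(1)) \binom{n}{t}$, which gives the theorem. Generalizing the Katona--Kierstead space barrier, I partition $V(H) = A \cup B$ with $|B| = \lfloor n/2 \rfloor$, pick an integer $j_0$ with $k/2 - q < j_0 \le k/2$ (such an integer always exists since this range has length $q \ge 1$), and set
\[
E(H) := \left\{ e \in \binom{V}{k} : |e \cap B| \notin F \right\}, \quad F := \{j_0, j_0 + 1, \ldots, j_0 + q - 1\}.
\]

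To see $H$ has no Hamilton $\ell$-cycle, suppose toward a contradiction that $v_1, \ldots, v_n$ is a cyclic order realizing one with edges $e_i = \{v_{(i-1)q+1}, \ldots, v_{(i-1)q+k}\}$ (indices mod $n$), and set $f_i := |e_i \cap B|$. Since consecutive edges share $\ell$ vertices and differ in only $q$ others, $|f_{i+1} - f_i| \le q$. As $F$ is a length-$q$ interval and every $f_i \notin F$, the sequence cannot cross the gap, forcing either $f_i \le j_0 - 1$ for every $i$ or $f_i \ge j_0 + q$ for every $i$. The double count $\sum_i f_i = \sum_{v \in B} \deg_{\text{cyc}}(v)$, combined with the fact that every vertex has cycle-degree in $\{\lfloor k/q \rfloor, \lceil k/q \rceil\}$ and $|B| = (1 + o(1))n/2$, pins the average $(q/n) \sum_i f_i$ to within $o(1)$ of $k/2$. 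The choice $k/2 - q < j_0 \le k/2$ then places $j_0 - 1$ strictly below $k/2$ and $j_0 + q$ strictly above it, contradicting both alternatives.

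For the minimum $d$-degree, fix any $d$-set $D$ and let $p := |D \cap B| \in \{0, \ldots, d\}$. A $k$-set $e \supseteq D$ is a non-edge of $H$ iff the $t$-set $T := e \setminus D$ satisfies $|T \cap B| \in F - p$. Using $|A|, |B| = (1 + o(1)) n/2$, the standard estimate $\binom{|B| - p}{j} \binom{|A| - d + p}{t - j} = (1 + o(1)) \binom{t}{j} \cdot 2^{-t} \binom{n}{t}$ (with the convention $\binom{t}{j} = 0$ for $j \notin \{0, \ldots, t\}$) gives
\[
\binom{n-d}{t} - \deg_H(D) = (1 + o(1)) \cdot 2^{-t} \binom{n}{t} \sum_{j \in F - p} \binom{t}{j}.
\]
Since $F - p$ is again a block of $q$ consecutive integers, the inner sum is at most $b_{t,q}$ by definition. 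Hence $\delta_d(H) \ge (1 - b_{t, q} 2^{-t} - o(1)) \binom{n}{t}$, as required.

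The main technical delicacy lies in verifying that the chosen $j_0$ blocks both alternatives when $q \nmid k$: cycle-degrees are then non-uniform, so $\sum_i f_i$ varies slightly across cyclic arrangements. A short computation using the exact counts of vertices of each cycle-degree shows this variation is of order $\min(k \bmod q,\, q - (k \bmod q)) \le q/2$, which is strictly less than $q$; this is enough for the range $(k/2 - q, k/2]$ to still yield a contradiction in both alternatives. The rest of the argument is essentially bookkeeping with binomial coefficients.
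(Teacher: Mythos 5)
The construction and overall strategy match the paper's: forbid a band of $q := k-\ell$ consecutive values of $|e\cap B|$, use the counting argument from Proposition~\ref{clm:NO} to rule out a Hamilton $\ell$-cycle, and compute $\delta_d(H)$ via the binomial expansion. The degree computation and the use of $b_{t,k-\ell}$ are fine. However, there is a genuine gap in the choice of the forbidden band.

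You claim that \emph{any} integer $j_0$ with $k/2 - q < j_0 \le k/2$ yields a contradiction in both alternatives. This is false. With $|B|=n/2$, a contradiction from the counting argument requires both $2(j_0-1) < a'q$ and $aq < 2(j_0+q)$, where $a'=\lfloor k/q\rfloor$, $a=\lceil k/q\rceil$; equivalently, writing $r = k \bmod q$ (so $a'q = k-r$, $aq = k+q-r$ when $r>0$), the valid range is $\tfrac{k-q-r}{2} < j_0 < \tfrac{k-r+2}{2}$, which is a strict subinterval of $(k/2-q,\,k/2]$ whenever $r\ge 2$. Concretely, take $k=7$, $\ell=3$, so $q=4$, $r=3$: your range gives $j_0 \in \{0,1,2,3\}$, but only $j_0\in\{1,2\}$ satisfy the required inequalities. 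For $j_0=3$ one has $2(j_0-1)=4=a'q$, so the inequality is not strict and the argument collapses (and for $j_0=0$ the other inequality fails). Your final paragraph acknowledges that the average $(q/n)\sum_i f_i$ is not pinned at $k/2$ and tries to patch this with a refined bound on the variation, but even the refined bound (the achievable range of $(q/n)\sum_i f_i$ is $[k/2 - \min(r,q-r)/2,\, k/2 + \min(r,q-r)/2]$) does not certify all $j_0$ in $(k/2-q, k/2]$; e.g., $k=10$, $q=7$, $r=3$ gives the refined valid set $\{0,1,2,3,4\}$, yet your range admits $j_0=5$.

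The fix, which is what the paper does, is to make a specific choice and verify it: take $j_0 = \lceil \ell/2 \rceil$. Then $2(j_0-1)\le \ell-1 < \ell+1 \le a'q$ and $aq \le \ell+2q-1 < \ell+2q \le 2(j_0+q)$, so Proposition~\ref{clm:NO} applies with $|X|=n/2$. Note this value does lie in your interval, so your construction is correct; only the justification that any $j_0$ in $(k/2-q, k/2]$ works is wrong, and it would have led you astray had you happened to pick a bad $j_0$ from that interval.
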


Theorem~\ref{prop:cons} disproves both Conjectures~\ref{conj:1} and \ref{conj:2}.

\begin{corollary}\label{cor:disprove}
For all $k$,
\begin{align*}
h_{k-2}(k,n) \ge \left(\frac 59+o(1)\right) \binom n2, \ h_{k-3}(k,n) \ge \left(\frac 58+o(1)\right) \binom n3,  \ h_{k-4}(k,n) \ge \left(\frac{409}{625}+o(1)\right) \binom n4
\end{align*}
and in general, for any $1\le d \le k-1$,
\begin{align}
h_d (k,n) & 
> \left(1 - \frac1{\sqrt{3(k-d)/2+1}} \right) \binom{n}{k-d}. \label{eq:Cor1}
\end{align}
These bounds imply that Conjecture~\ref{conj:1} is false when $k\ge 4$ and $\min\{k-4, k/2\}\le d\le k-2$, and Conjecture~\ref{conj:2} is false whenever $k\ge 4$.
\end{corollary}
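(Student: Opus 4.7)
The plan is to derive Corollary~\ref{cor:disprove} from Theorem~\ref{prop:cons} via three tasks: (i) evaluate the coefficient of $\binom{n}{t}$ at $t = 2, 3, 4$ to obtain the specific bounds, (ii) upper-bound the coefficient for general $t$ via a central binomial estimate, and (iii) verify the resulting constants exceed the conjectured thresholds. Writing $c_t$ for $\binom{t}{\lfloor t/2\rfloor}\lceil t/2\rceil^{\lceil t/2\rceil}(\lfloor t/2\rfloor+1)^{\lfloor t/2\rfloor}/(t+1)^t$, direct substitution gives $c_2 = 4/9$, $c_3 = 3/8$, $c_4 = 216/625$, which proves the first three bounds.

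For (ii), the key observation is that $c_t$ simplifies by parity: the odd case $t = 2s+1$ collapses to $c_{2s+1} = \binom{2s+1}{s}/2^{2s+1}$ (since $(s+1)^{2s+1}$ cancels against the factor in $(t+1)^t = 2^{2s+1}(s+1)^{2s+1}$), and the even case gives $c_{2s} = \binom{2s}{s}(s(s+1))^s/(2s+1)^{2s}$. I would then invoke the estimate $\binom{2m}{m} \le 4^m/\sqrt{3m+1}$, which holds with equality at $m=1$ and follows by a one-line induction from $\binom{2m+2}{m+1} = \frac{2(2m+1)}{m+1}\binom{2m}{m}$. In the even case, combined with $4s(s+1) = (2s+1)^2 - 1$, this yields $c_{2s} < 1/\sqrt{3s+1} = 1/\sqrt{3t/2+1}$. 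In the odd case, Pascal's identity together with $\binom{2s+1}{s} = \binom{2s+1}{s+1}$ rewrites $c_{2s+1} = \binom{2s+2}{s+1}/4^{s+1} \le 1/\sqrt{3s+4} < 1/\sqrt{3(2s+1)/2+1}$, completing \eqref{eq:Cor1}.

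For (iii), within the confirmed range of Conjecture~\ref{conj:mat} we have $m_d(k,n)/\binom{n-d}{k-d} = \max\{1/2,\,1-(1-1/k)^{k-d}\} + o(1)$, so we need our coefficient to exceed this max. The $1/2$ branch is handled uniformly: $5/9$, $5/8$, $409/625$ all exceed $1/2$, and for smaller $d$ in the allowed range, $t = k-d \le k/2$ combined with $(1-1/k)^{k/2} > 1/2$ (valid for $k \ge 4$) forces the max to be $1/2$, which \eqref{eq:Cor1} beats for $t \ge 2$. Only a few small-$k$ exceptional pairs where the other branch dominates need direct checks: $d = k-3$ with $k = 4$ gives $37/64 < 5/8$, and $d = k-4$ with $k \in \{5, 6\}$ gives $369/625 < 409/625$ and $671/1296 < 409/625$. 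For Conjecture~\ref{conj:2}, the case $k = 4$ uses $c_3 = 3/8 < 27/64 = (3/4)^3$; for $k \ge 5$ one verifies $c_{k-1} < (1-1/k)^{k-1}$ by direct substitution at $k = 5, 6$ and by noting that $1/\sqrt{3(k-1)/2+1} \le 1/\sqrt{10} < 1/e \le (1-1/k)^{k-1}$ for $k \ge 7$.

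The main obstacle is not conceptual but organisational: no single clean inequality uniformly compares \eqref{eq:Cor1} with $1-(1-1/k)^{k-d}$, so one must peel off a handful of small-$k$ corners by direct substitution while reserving the general bound for the regime where the $1/2$ branch of the max dominates. The algebraic ingredients in task (ii) — parity simplification, the central binomial inequality, Pascal's rule — are entirely standard.
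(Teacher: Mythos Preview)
Your proof is correct and essentially mirrors the paper's: the same explicit values $c_2=4/9$, $c_3=3/8$, $c_4=216/625$, the same parity simplification and central-binomial estimate $\binom{2m}{m}\le 4^m/\sqrt{3m+1}$ for \eqref{eq:Cor1}, and the same comparison with $\tfrac12$ and $(1-1/k)^t$ via the $1/e$ bound plus direct checks at small $t$. The only cosmetic difference is that in step~(iii) the paper verifies $f(t)<\tfrac12$ and $f(t)<(1-1/k)^t$ uniformly for all $k\ge4$, $2\le t\le k-1$, rather than first determining which branch of the max dominates, which slightly streamlines the case analysis.
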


We will prove Theorem~\ref{prop:cons}, Theorem~\ref{prop:cons2}, and Corollary~\ref{cor:disprove} in the next section.

\medskip
We believe that Conjecture~\ref{conj:1} is false whenever $k\ge 4$ but due to our limited knowledge on $m_d(k,n)$, we can only disprove Conjecture~\ref{conj:1} for the cases when $m_d(k,n)$ is known.

%
%
%

This bound $h_{k-2}(k, n)\ge ( \frac59 + o(1))\binom n2$ coincides with the value of $m_1(3, n)$ --  it was shown in \cite{HPS} that $m_1(3, n)= (5/9 +o(1)) \binom n2$, and it was widely believed that $h_1(3, n)= (5/9 + o(1))\binom n2$, e.g., see \cite{RoRu14}.
On the other hand, it is known  \cite{Pik} that $m_2(4, n)= (\frac12+o(1)) \binom n2$, which is smaller than $\frac59 \binom n2$. Therefore $k=4$ and $d=2$ is the smallest case when Theorem~\ref{prop:cons} disproves Conjecture~\ref{conj:1}.
More importantly, \eqref{eq:Cor1} shows that $h_d(k, n)/\binom{n}{k-d}$ tends to one as $k-d$ tends to $\infty$.  For example, as $k$ becomes sufficiently large, $h_{k - \ln k}(k,n)$ is close to $\binom{n-d}{k-d}$, the trivial upper bound. In contrast, Conjecture~\ref{conj:mat} suggests that there exists $c>0$ independent of $k$ and $d$ ($c= 1/e$, where $e=2.718...$, if Conjecture~\ref{conj:mat} is true) such that $m_d(k, n)\le (1-c)\binom{n-d}{k-d}$.

Similarly, by Theorem~\ref{prop:cons2}, if $k-\ell = o(\sqrt{t})$, 
$h_d^{\ell}(k,n)/ \binom{n}{t}$ tends to one as $t$ tends to $\infty$ because
\[
1 - b_{t, k-\ell} {2^{-t}} \ge 1- \frac{k-\ell}{2^t}\binom {t}{ \lfloor t/2 \rfloor} \approx 1- \frac{o(\sqrt t)}{\sqrt{\pi t/2}}.
\]
Theorem~\ref{prop:cons2} also implies the following special case: suppose $k$ is odd  and $\ell=d=k-2$.
Then $t=2$ and $b_{t, k-\ell}=b_{2,2} = 3$, and consequently $h_{k-2}^{k-2}(k,n) \ge \left( \frac14 + o(1) \right)\binom n2 $. Previously it was only known that $h_{k-2}^{k-2}(k,n)\ge (1 - (\frac{k}{k+1})^2+o(1))\binom n2$ from
\eqref{eq:sb} (where $a=\lceil k/(k-\ell) \rceil= (k+1)/2$). When $k$ is large, the bound provided by Theorem~\ref{prop:cons2} is much better.

Finally, we do not know if Theorems~\ref{prop:cons} and \ref{prop:cons2} are best possible. Glebov, Person, and Weps \cite{GPW} gave a general upper bound (far away from our lower bounds)
\[
h_d^{\ell}(k, n)\le \left(1 -  \frac{1}{c k^{3k-3}} \right) \binom{n-d}{k-d},
\]
where $c$ is a constant independent of $d, \ell, k, n$.

\section{The proofs}

Before proving our results, it is instructive to recall the so-called \emph{space barrier}.

\begin{proposition}\cite{KMO}\label{prop:sb}
Let $H=(V, E)$ be an $n$-vertex $k$-graph such that $V=X\dot\cup Y$
\footnote{Throughout the paper, we write $X\dot\cup Y$ for $X\cup Y$ when sets $X$, $Y$ are disjoint.}
and $E=\{e\in \binom Vk: e\cap X \neq \emptyset \}$.
Suppose $|X| < \frac{1}{a(k-\ell)}n$, where $a:=\lceil k/(k-\ell) \rceil$, then $H$ does not contain a Hamilton $\ell$-cycle.
\end{proposition}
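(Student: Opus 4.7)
The plan is to prove the contrapositive by a direct double counting argument: if a Hamilton $\ell$-cycle exists, then $X$ must be large enough to ``hit'' every edge, and each vertex of $X$ can only hit a bounded number of edges.

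First, I would observe that by the definition of $E$, every edge of $H$ intersects $X$. Hence, if $C\subseteq H$ is a Hamilton $\ell$-cycle, every edge of $C$ must intersect $X$. Note that $C$ has exactly $n/(k-\ell)$ edges.

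Next, I would bound the number of edges of $C$ containing a fixed vertex $v$. Write the cyclic vertex ordering of $C$ as $v_1,\dots,v_n$ (indices mod $n$), so that the edges of $C$ are
\[
e_i=\{v_{(i-1)(k-\ell)+1},\,v_{(i-1)(k-\ell)+2},\,\dots,\,v_{(i-1)(k-\ell)+k}\},\qquad i=1,\dots,n/(k-\ell).
\]
A vertex $v_j$ belongs to $e_i$ iff $(i-1)(k-\ell)+1\le j\le (i-1)(k-\ell)+k$ (mod $n$), and the number of such indices $i$ is at most $\lceil k/(k-\ell)\rceil=a$. Hence every vertex of $V$, and in particular every vertex of $X$, is contained in at most $a$ edges of $C$.

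Finally, I would double count the pairs $(x,e)$ with $x\in X$, $e\in E(C)$, and $x\in e$. On one hand, since every edge of $C$ contains at least one vertex of $X$, the number of such pairs is at least $|E(C)|=n/(k-\ell)$. On the other hand, the previous step bounds it from above by $a\,|X|$. Combining,
\[
|X|\ge \frac{n}{a(k-\ell)},
\]
which contradicts the hypothesis $|X|<n/(a(k-\ell))$. There is essentially no obstacle here; the only point requiring minor care is the bound $\lceil k/(k-\ell)\rceil$ on the number of edges through a vertex, which follows from the fact that $k$ consecutive positions on the cycle meet at most that many of the ``windows'' of length $k$ starting at multiples of $k-\ell$.
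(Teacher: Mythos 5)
Your proof is correct and uses the same double-counting argument that the paper employs: counting incidences between $X$ and the edges of the Hamilton $\ell$-cycle, using that every edge meets $X$ and every vertex lies in at most $a = \lceil k/(k-\ell)\rceil$ edges of the cycle. The paper obtains Proposition~\ref{prop:sb} as the special case $j=\ell+1-k$ of the more general Proposition~\ref{clm:NO}, but when that proof is specialized it reduces to exactly your computation.
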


A proof of Proposition~\ref{prop:sb} can be found in \cite[Proposition~2.2]{KMO} and is actually
included in our proof of Proposition~\ref{clm:NO} below. It is not hard to see that Proposition~\ref{prop:sb} shows that
\begin{equation}
\label{eq:sb}
h_d^{\ell}(k,n) \ge \left( 1- \left(1- \frac{1}{a(k-\ell)} \right)^{k-d}  + o(1)\right)\binom{n-d}{k-d}.
\end{equation}

Now we state our construction for Hamilton cycles -- it generalizes the one given by Katona and Kierstead \cite[Theorem 3]{KK} (where $j= \lfloor k/2 \rfloor$) and the space barrier (where $j= \ell + 1 - k$) simultaneously. The special case of $k=3, \ell=2, j=1$, and $|X|=n/3$ appears in \cite[Construction 2]{RoRu14}.

\begin{proposition}\label{clm:NO}
Given an integer $j$ such that $\ell+1-k \le j\le k$, let $H=(V, E)$ be an $n$-vertex $k$-graph such that $V=X\dot\cup Y$ and $E=\{e\in \binom Vk: |e\cap X| \notin \{j, j+1,\dots, j+k-\ell-1\}$.
Suppose $\frac{j-1}{a'(k-\ell)}n < |X| < \frac{j+k-\ell}{a(k-\ell)}n$, where $a':=\lfloor k/(k-\ell) \rfloor$ and $a:=\lceil k/(k-\ell) \rceil$, then $H$ does not contain a Hamilton $\ell$-cycle.
\end{proposition}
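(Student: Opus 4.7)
The plan is a direct contradiction argument on a numerical invariant of the cycle. Suppose $H$ contains a Hamilton $\ell$-cycle $C$, and order its vertices cyclically as $v_1, \ldots, v_n$ so that the edges of $C$ are $e_s = \{v_{(s-1)(k-\ell)+1}, \ldots, v_{(s-1)(k-\ell)+k}\}$ (indices mod $n$) for $s = 1, \ldots, m$, where $m = n/(k-\ell)$. The object I would analyze is the cyclic sequence $x_s := |e_s \cap X|$ of intersection sizes.

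First I would establish two elementary facts. Consecutive edges $e_s, e_{s+1}$ share exactly $\ell$ vertices and each differ in $k-\ell$ vertices, so $|x_{s+1} - x_s| \le k - \ell$. Moreover, each vertex $v_i$ belongs to either $a' = \lfloor k/(k-\ell) \rfloor$ or $a = \lceil k/(k-\ell) \rceil$ edges of $C$ (just count the $s$ satisfying $(s-1)(k-\ell)+1 \le i \le (s-1)(k-\ell)+k$), so double counting gives $a'|X| \le \sum_{s=1}^{m} x_s \le a|X|$.

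Next I would use the edge condition on $H$: since $e_s \in E(H)$, we have $x_s \notin \{j, j+1, \ldots, j+k-\ell-1\}$, so every $x_s$ lies either in the \emph{low} region $\{0, \ldots, j-1\}$ or the \emph{high} region $\{j+k-\ell, \ldots, k\}$. Going from low to high in a single step would require a jump of at least $(j+k-\ell)-(j-1)=k-\ell+1 > k-\ell$, so by the bounded-step property and cyclicity, all $x_s$ must lie on the same side. In the low case, $a'|X| \le \sum_s x_s \le (j-1)m = (j-1)n/(k-\ell)$, which contradicts the hypothesis $|X| > (j-1)n/(a'(k-\ell))$. In the high case, $a|X| \ge \sum_s x_s \ge (j+k-\ell)m = (j+k-\ell)n/(k-\ell)$, which contradicts $|X| < (j+k-\ell)n/(a(k-\ell))$.

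There is no serious obstacle. The only subtle point, and the reason the hypothesis involves two different denominators $a'$ and $a$, is to match the floor $a'$ against the lower bound on $\sum_s x_s$ in the low-region case and the ceiling $a$ against the upper bound in the high-region case; choosing them the other way around would weaken the bounds and fail to yield the contradiction. As sanity checks, setting $j = \ell+1-k$ collapses the forbidden window to $\{|e\cap X|=0\}$ and recovers Proposition~\ref{prop:sb}, while the Katona--Kierstead construction corresponds to the middle case $j = \lfloor k/2 \rfloor$.
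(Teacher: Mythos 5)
Your proof is correct and follows essentially the same route as the paper's: bound the step size $|x_{s+1}-x_s|\le k-\ell$, conclude all intersection sizes lie on one side of the forbidden window, and double-count $\sum_s|e_s\cap X|$ against $a'|X|$ and $a|X|$ to reach a contradiction with the hypothesis. Your version is marginally more careful in writing the non-strict inequalities $\sum_s x_s \le (j-1)m$ and $\sum_s x_s \ge (j+k-\ell)m$ (the paper writes strict ones, which is a small imprecision but harmless since the hypothesis on $|X|$ is strict).
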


\begin{proof}
Suppose instead, that $H$ contains a Hamilton $\ell$-cycle $C$. Then all edges $e$ of $C$ satisfy $|e\cap X| \notin \{j, j+1,\dots, j+k-\ell-1\}$.
We claim that either all edges $e$ of $C$ satisfy $|e\cap X|\le j-1$ or all edges $e$ of $C$ satisfy $|e\cap X|\ge j+k-\ell$. Otherwise, there must be two consecutive edges $e_1, e_{2}$ in $C$ such that $|e_1 \cap X|\le j-1$ and $|e_{2}\cap X|\ge j+k-\ell$. However, since $|e_1\cap e_2|=\ell$, we have $||e_1\cap X| - |e_2\cap X||\le k-\ell$, a contradiction.

Observe that every vertex of $H$ is contained in either $a$ or $a'$ edges of $C$ and $C$ contains $\frac n{k-\ell}$ edges.
This implies that
\[
a' |X| \le \sum_{e\in C} |e\cap X| \le a |X|.
\]
On the other hand, we have $\sum_{e\in C} |e\cap X| < (j-1) \frac n{k-\ell}$ or $\sum_{e\in C} |e\cap X| > (j+k-\ell) \frac n{k-\ell}$.
In either case, we get a contradiction with the assumption $\frac{j-1}{a'(k-\ell)}n < |X| < \frac{j+k-\ell}{a(k-\ell)}n$.
\end{proof}


Note that by reducing the lower and upper bounds for $|X|$ by small constants, we can conclude that $H$ actually contains no \emph{Hamilton $\ell$-path}.

\medskip

To prove Theorems~\ref{prop:cons} and \ref{prop:cons2}, we apply Proposition~\ref{clm:NO} with appropriate $j$ and $|X|$.
We need the following fact.

\begin{fact}\label{clm:j}
Let $k,d,t,j$ be integers such that $1\le d\le k-1$ and $t=k-d$. If $\frac{j-1}{k} < \frac{\lceil t/2 \rceil}{t+1} < \frac{j+1}k$, then $ j - d\le \lceil t/2 \rceil \le j$.
\end{fact}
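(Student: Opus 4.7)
The statement is purely arithmetic, so the plan is a direct manipulation. First, multiply the hypothesis by $k$ to obtain
\[
j-1 < \frac{k\lceil t/2 \rceil}{t+1} < j+1,
\]
and then use $k = t+d$ to rewrite the middle term as
\[
\frac{k\lceil t/2 \rceil}{t+1} = \lceil t/2 \rceil + \lceil t/2 \rceil \cdot \frac{d-1}{t+1},
\]
which isolates the main contribution $\lceil t/2 \rceil$ from a small correction.

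For the upper bound $\lceil t/2 \rceil \le j$, the assumption $d \ge 1$ makes the correction term nonnegative, so $\lceil t/2 \rceil \le \frac{k\lceil t/2 \rceil}{t+1} < j+1$, and integrality yields $j \ge \lceil t/2 \rceil$.

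For the other bound $j - d \le \lceil t/2 \rceil$, apply the elementary estimate $\lceil t/2 \rceil \le (t+1)/2$ to bound the correction term by $(d-1)/2$. Substituting this into the left half of the displayed inequality gives
\[
j - 1 < \lceil t/2 \rceil + \frac{d-1}{2}, \qquad \text{hence} \qquad j < \lceil t/2 \rceil + \frac{d+1}{2} \le \lceil t/2 \rceil + d,
\]
where the last step uses $d \ge 1$; integrality then delivers $j - d \le \lceil t/2 \rceil$.

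There is no real obstacle here. The only point worth noting is the use of $\lceil t/2 \rceil \le (t+1)/2$, which is sharp precisely when $t$ is odd; this is exactly the bound needed to make the correction term small enough to be absorbed into $d$ rather than into $d-1$.
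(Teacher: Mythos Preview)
Your proof is correct and follows essentially the same route as the paper: multiply through by $k$, rewrite $\frac{k\lceil t/2\rceil}{t+1}$ as $\lceil t/2\rceil$ plus a correction term using $k=t+d$, and bound that correction. The only cosmetic difference is that you bound the correction via $\lceil t/2\rceil \le (t+1)/2$ (giving correction $\le (d-1)/2$), whereas the paper uses the looser $\lceil t/2\rceil \le t+1$ (giving correction $\le d-1$); both suffice.
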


\begin{proof}
Since $\frac{j-1}{k} < \frac{\lceil t/2 \rceil}{t+1} < \frac{j+1}k$, we get
\[
\frac{k \lceil t/2 \rceil}{t+1} -1 < j < \frac{k\lceil t/2 \rceil}{t+1} +1.
\]
We need to show that $\lceil t/2 \rceil \le j \le \lceil t/2 \rceil + d$. First,
\[
j< \frac{k\lceil t/2 \rceil}{t+1} +1 = \lceil t/2 \rceil + \frac{(k-t-1)\lceil t/2 \rceil}{t+1} +1 \le \lceil t/2 \rceil + d,
\]
because $\lceil t/2 \rceil \le t+1$ and $k-t=d$.
Second, $j> \frac{k \lceil t/2 \rceil}{t+1} -1\ge \lceil t/2 \rceil - 1$ as $k\ge t+1$, so $j\ge \lceil t/2 \rceil$.
\end{proof}

In the proofs of Theorems~\ref{prop:cons} and \ref{prop:cons2}, we will consider binomial coefficients $\binom pq$ with $q<0$ -- in this case $\binom pq = 0$. We will conveniently write $|X|= xn$, where $0< x< 1$, instead of $|X|= \lfloor xn \rfloor$ -- this does not affect our calculations as $n$ is sufficiently large.

\begin{proof}[Proof of Theorem~\ref{prop:cons}]
Let $x = \lceil t/2 \rceil /(t+1)$. Since $\bigcup_{j=1}^{k-1} (\frac{j-1}{k}, \frac{j+1}k) = (0, 1)$ and $1/3\le \frac{\lceil t/2 \rceil}{t+1} \le 1/2$, there exists an integer $j\in [k-1]$ such that $\frac{j-1}{k} < \frac{\lceil t/2 \rceil}{t+1} < \frac{j+1}k$.
Let $H=(V, E)$ be an $n$-vertex $k$-graph such that $V=X\dot\cup Y$, $|X|=xn$ and $E=\{e\in \binom Vk: |e\cap X| \neq j\}$. Since $\frac{j-1}{k}n < |X| < \frac{j+1}{k}n$, $H$ contains no tight Hamilton cycle by Proposition~\ref{clm:NO}.

Now let us compute $\delta_{d}(H)$. For $0\le i\le d$, let $S_i$ be any $d$-vertex subset of $V$ that contains exactly $i$ vertices in $X$. By the definition of $H$,
\[
\deg_{H}(S_i) = \binom{n-d}{t} - \binom{|X|-i}{j - i} \binom{|Y| - (d-i)}{t-j +i}.
\]
Note that this holds for $i> j$ or $i< j - t$ trivially.
So we have
\begin{align}
\delta_{d}(H) &= \min_{ 0 \le i\le d}\left\{ \binom{n-d}{t} - \binom{|X|-i}{j - i} \binom{|Y| - (d-i)}{t-j+i} \right\} \nonumber \\
& = \binom{n}{t} - \max_{j - d \le i'\le  j }\left\{  \binom{|X|}{i'} \binom{|Y|}{t-i'} \right\} + o(n^{t}). \nonumber
\end{align}
Write $|X| = xn$ and $|Y| = yn$. When $0\le i' \le t$, we have
\[
\binom{|X|}{i'} \binom{|Y|}{t-i'} = \frac{(xn)^{i'} (yn)^{t-i'}}{i'! (t-i')!} + o(n^{t}) = \binom{t}{i'} x^{i'} y^{t-i'} \binom nt + o(n^{t}).
\]
When $i'<0$ or $i'>t$, we have $\binom{|X|}{i'} \binom{|Y|}{t-i'} = 0 =  \binom{t}{i'} x^{i'} y^{t-i'} \binom nt $.
In all cases, we have
\[
\delta_{d}(H) = \binom{n}{t} - \max_{j - d \le i'\le j}\left\{ \binom{t}{i'} x^{i'} y^{t-i'} \right\} \binom nt + o(n^{t}).
\]


Let $a_i := \binom t{i} x^{i} y^{t-i}$. Since $x = \lceil t/2 \rceil /(t+1)$ and $y=1-x$, it is easy to see that $\max_{0 \le i\le t} a_{i} = a_{\lceil t/2 \rceil}$ (e.g., by observing $\frac{a_i}{a_{i+1}} = \frac yx \cdot \frac{i+1}{t-i}$ for $0\le i< t$).
Moreover, by Fact~\ref{clm:j}, we have $j - d\le \lceil t/2 \rceil \le j $.
Together with $x = \lceil t/2 \rceil /(t+1)$, this implies that
\[
\max_{j - d \le i\le j} \left\{ a_{i} \right\} = a_{\lceil t/2 \rceil} = \binom t{ \lceil t/2 \rceil} \frac{ \lceil t/2\rceil^{\lceil t/2\rceil} (\lfloor t/2 \rfloor+1) ^{\lfloor t/2 \rfloor}}{(t+1)^t}
\]
and thus the proof is complete.
\end{proof}

%


\medskip

Now we turn to the proof of Theorem~\ref{prop:cons2}, in which we assume that $|X|=n/2$, though a further improvement of the lower bound may be possible by considering other values of $|X|$.

\begin{proof}[Proof of Theorem~\ref{prop:cons2}]
The proof is similar to the one of Theorem~\ref{prop:cons}.
Let $H=(V, E)$ be an $n$-vertex $k$-graph such that $V=X\dot\cup Y$, $|X|=n/2$ and $E=\{e\in \binom Vk: |e\cap X| \notin \{\lceil \ell/2 \rceil, \dots, \lceil \ell/2 \rceil + k- \ell - 1\}\}$.
Note that
\begin{align*}
a'(k-\ell) &= \left \lfloor \frac{k}{k-\ell} \right\rfloor (k-\ell) \ge k - (k-\ell - 1) = \ell+1 > 2(\lceil \ell/2 \rceil  - 1), \text{ and} \\
a(k-\ell) &= \left \lceil \frac{k}{k-\ell} \right\rceil (k-\ell) \le k + (k-\ell-1) < 2(k - \lfloor \ell/2 \rfloor) = 2(\lceil \ell/2 \rceil+k - \ell).
\end{align*}
So we have
\[
\frac{\lceil \ell/2 \rceil - 1}{a'(k-\ell)}n < |X|=\frac n2 < \frac{\lceil \ell/2 \rceil+k - \ell}{a(k-\ell)}n.
\]
Thus, $H$ contains no Hamilton $\ell$-cycle by Proposition~\ref{clm:NO}.

Fix $1\le d\le k-1$ and let $t=k-d$. Now we compute $\delta_{d}(H)$. For $0\le i\le d$, let $S_i$ be any $d$-vertex subset of $V$ that contains exactly $i$ vertices in $X$.
It is easy to see that
\[
\deg_{H}(S_i) = \binom{n}{t} - \sum_{p=i'}^{i'+k-\ell-1} \binom{|X|}{p} \binom{|Y|}{t-p} + o(n^t),
\]
where $i' = \lceil \ell/2 \rceil -i$. 
Using $|X|=|Y|=n/2$ and the similar calculations in the proof of Theorem~\ref{prop:cons}, we get
\[
\deg_{H}(S_i) = \binom{n}{t} - \sum_{p=i'}^{i'+k-\ell-1} \binom{t}{p} \frac1{2^t}\binom nt + o(n^t).
\]
By the definition of $b_{t, k-\ell}$, we have 
\begin{align}
\delta_{d}(H) &= \min_{0\le i \le d} \deg_H(S_i) \ge \binom{n}{t} - b_{t, k-\ell} {2^{-t}} \binom nt + o(n^{t}) \nonumber. \qedhere
\end{align}
\end{proof}

\medskip


Corollary~\ref{cor:disprove} follows from Theorem~\ref{prop:cons} via simple calculations.

\begin{proof}[Proof of Corollary~\ref{cor:disprove}]
Let $t= k-d$ and
\[
f(t):= \binom t{ \lfloor t/2 \rfloor } \frac{ \lceil t/2\rceil^{\lceil t/2\rceil} (\lfloor t/2 \rfloor+1) ^{\lfloor t/2 \rfloor}}{(t+1)^t}.
\]
Theorem~\ref{prop:cons} states that $h_{k-t}(k, n)\ge (1 - f(t) + o(1)) \binom nt$ for any $1\le t\le k-1$.
Since
\begin{equation*}
\label{eq:f234}
f(2)= \frac49, \quad f(3)=\frac38, \quad \text{and} \quad f(4)= \frac{216}{625},
\end{equation*}
the bounds for $h_{k-t}(k, n)$,  $t=2,3,4$, are immediate.
To see \eqref{eq:Cor1}, it suffices to show that for $t\ge 1$,
\begin{align}
1 - f(t) 
> 1 - \frac1{\sqrt{3t/2+1}}. \label{eq:2}
\end{align}
When $t$ is odd, $\frac{ \lceil t/2\rceil^{\lceil t/2\rceil} (\lfloor t/2 \rfloor+1) ^{\lfloor t/2 \rfloor}}{(t+1)^t} = 1/2^t$; when $t$ is even, ${ \lceil t/2\rceil^{\lceil t/2\rceil} (\lfloor t/2 \rfloor+1) ^{\lfloor t/2 \rfloor}} < (\frac{t+1}2)^t$.
Thus, for all $t$, we have
\[
f(t)\le \binom t{ \lfloor t/2 \rfloor } \frac{1} {2^t},
\]
where a strict inequality holds for all even $t$.
Now we use the fact $\binom{2m}m \le 2^{2m}/\sqrt{3m+1}$, which holds for all integers $m\ge 1$.
Thus, for all even $t$, we have $f(t) \le 1/\sqrt{3t/2 +1}$; for all odd $t$,
\[
f(t) \le \binom t{ \lfloor t/2 \rfloor} \frac{1} {2^t} = \frac12 \binom {t+1}{ \lfloor t/2 \rfloor + 1} \frac{1} {2^t} \le \frac{1}{\sqrt{3(t+1)/2+1}} < \frac{1}{\sqrt{3t/2+1}}.
\]
Hence $f(t)\le 1/\sqrt{3t/2+1}$ for all $t\ge 1$. Moreover, by the computation above, regardless of the parity of $t$, the strict inequality always holds and thus \eqref{eq:2} is proved.

\smallskip
We next show that whenever $k\ge 4$ and $2\le t\le k-1$,
\[
1 - f(t) > \max \left\{ \frac12, 1- \left(1- \frac{1}{k} \right)^{t}  \right\}.
\]
This implies that Conjecture~\ref{conj:2} fails for $k\ge 4$, and Conjecture~\ref{conj:1} fails for $k\ge 4$ and $\min\{k-4, k/2\}\le d\le k-2$ (because $m_d(k, n)/\binom{n}{k-d} = \max \left\{ \frac12, 1- \left(1- \frac{1}{k} \right)^{k-d}  \right\} + o(1)$ in this case). It suffices to show that for $k\ge 4$ and $2\le t\le k-1$,
\[
f(t) < 1/2\, \text{ and }\, f(t) < \left(1- \frac{1}{k} \right)^{t}.
\]
The first inequality immediately follows from \eqref{eq:2} and $1/\sqrt{3t/2+1} \le 1/2$.
For the second inequality, note that
\[
f(t) < \frac{1}{\sqrt{3t/2+1}} < \frac 1e < \left(1- \frac{1}{k} \right)^{k-1} \le \left(1- \frac{1}{k} \right)^{t}
\]
for all $t\ge 5$.
For $t=2,3$ and all $k\ge 4$, one can verify $f(t) < (3/4)^t \le \left(1- \frac{1}{k} \right)^{t}$ easily. Also, for $t=4$ and all $k\ge 5$, we have $f(4) < (4/5)^4 \le (1- \frac{1}{k} )^{4}$.
\end{proof}


\bibliographystyle{amsplain}
\bibliography{Apr2015}

\end{document}